\newtheorem{thm}{Theorem}[section]
\newtheorem{prop}[thm]{Proposition}
\newtheorem{cor}[thm]{Corollary}
\newtheorem{lem}[thm]{Lemma}
\newtheorem{prob}[thm]{Problem}
\newtheorem{exam}[thm]{Example}
\newtheorem{rem}[thm]{Remark}
\newcommand{\N}{\mathbb{N}}
\newcommand{\Q}{\mathbb{Q}}
\newcommand{\Z}{\mathbb{Z}}
\newcommand{\SO}{\mathrm{\SO}}
\title[Realising sets of integers as mapping degree sets]
{Realising sets of integers as mapping degree sets}
\author{Christoforos Neofytidis}
\address{Department of Mathematics, Ohio State University, Columbus, OH 43210, USA}
\email{neofytidis.1@osu.edu}
\author{Shicheng Wang}
\address{School of Mathematical Sciences, Peking University, Beijing 100871, China}
\email{wangsc@math.pku.edu.cn}
\author{Zhongzi Wang}
\address{School of Mathematical Sciences, Peking University, Beijing 100871, China}
\email{wangzz22@stu.pku.edu.cn}
\date{\today}
\subjclass[2010]{55M25}
\keywords{Mapping degree, realisation problem, 3-manifolds, direct products, arithmetic progression, geometric progression}
\begin{document}

\maketitle

\begin{abstract} Given two closed oriented manifolds $M,N$ of the same dimension, we denote the set of degrees of maps from $M$ to $N$ by $D(M,N)$. The set $D(M,N)$ always contains zero.
We show the following (non-)realisability results:
\begin{itemize}
\item[(i)] There exists an infinite subset $A$ of $\Z$ containing $0$ which cannot be realised as $D(M,N)$, for any closed oriented $n$-manifolds $M,N$. 
\item[(ii)] Every finite arithmetic progression
 of integers containing $0$ can  be realised as $D(M,N)$, for some closed oriented $3$-manifolds $M,N$.
 \item[(iii)] Together with $0$, every finite  geometric progression of positive integers starting from $1$ can  be realised as $D(M,N)$, for some closed oriented manifolds $M,N$.
\end{itemize}
\end{abstract}

\tableofcontents
\section{Introduction} 

Let $M,N$ be two closed oriented manifolds  of the same dimension.
The mapping degree of a map $f\colon M\to N$, denoted by $\deg(f)$, is 
probably one of the  oldest and most fundamental concepts in topology.
The set of degrees of maps from $M$ to $N$, defined by
\[
D(M,N):=\{d\in\Z \ | \ \exists \ f\colon M\to N, \ \deg(f)=d\}.
\]
builds a bridge from topology to number theory: Each ordered pair  of manifolds $M, N$  as above gives a subset $D(M,N)$ of the integers.

Calculating or estimating $D(M,N)$  for various classes of manifolds $(M,N)$ is a topic with a long history and applications, and it is still very active to date. Some fairly recent examples include computations for infinite self-mapping degree sets of 3-manifolds~\cite{SWWZ}, computations and estimates for self-mapping degrees for products together with connections to the individual self-mapping degrees of their factors~\cite{Ne1}, as well as for simply connected targets, such as the conjectured unboundedness of some $D(M,N)$ for each simply connected manifold $N$~\cite{CMV}. For a much richer discussion and results, we refer the reader to the references in the aforementioned papers.

Conversely, the problem of realising arbitrary sets of integers as mapping degrees does not seem to have been rigorously addressed thus far. More precisely, the following question is widely open:

\begin{prob}\label{problem} 
Given a set $A\subseteq\Z$ with $0\in A$, are there closed oriented manifolds $M$ and $N$ such that $D(M,N)=A$?
\end{prob}

\begin{rem}
Note that the condition $0\in A$ is clearly necessary, because the constant map $M\to N$ realises $0\in D(M,N)$ for any $M,N$. Another, more restrictive question related to Problem \ref{problem} is about self-mapping degrees: Given a set $A\subseteq\Z$ with $0,1\in A$ and $ab\in A$ whenever $a,b\in A$, is there a closed oriented manifold $M$ such that $D(M,M)=A$?  Again, the additional requirements $1\in A$ and $ab\in A$ whenever $a,b \in A$, are clearly necessary, because $1\in D(M,M)$ is realised by the identity map, and $ab\in D(M,M)$ is realised by composing two self-maps of $M$ of degrees $a$ and $b$.

\end{rem}

Problem \ref{problem} has been circulated for years; among other, the first two authors have been asked or have asked this question several times while delivering public lectures on the topic of mapping degree.
However, no answer had been given. 
In our first result, we answer Problem \ref{problem} in the negative.

\begin{thm}\label{non} 
There exists an infinite subset $A\subseteq\Z$ containing zero which cannot be realized as $D(M,N)$, for any closed oriented $n$-manifolds $M,N$. 
\end{thm}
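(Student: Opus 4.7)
The plan is to argue by a cardinality count: only countably many subsets of $\Z$ can arise as $D(M,N)$, whereas the collection of infinite subsets of $\Z$ containing $0$ is uncountable, so most such sets must be non-realisable. The starting observation is that $D(M,N)$ depends only on the oriented homotopy types of $M$ and $N$, since the mapping degree is an invariant of the oriented homotopy category; hence the family $\{D(M,N)\}$, as $(M,N)$ ranges over pairs of closed oriented manifolds of a common dimension, is indexed by pairs of oriented homotopy types.

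Next I would invoke the standard fact that for each fixed dimension $n$, the class of closed topological $n$-manifolds has only countably many oriented homotopy types. This follows from the theorem that every closed topological manifold has the homotopy type of a finite CW complex (immediate via Morse theory in the smooth category, and holding in general since compact ANRs are homotopy equivalent to finite CW complexes), together with the fact that finite CW complexes are countable up to homotopy equivalence. Summing over $n$ and doubling for orientations preserves countability, so the collection $\{D(M,N)\}$ is a countable subfamily of the power set of $\Z$.

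On the other hand, the infinite subsets $A \subseteq \Z$ with $0 \in A$ have cardinality the continuum: for each subset $S$ of the positive integers, the set $A_S := \{0\} \cup \{-k \ | \ k \geq 1\} \cup S$ is an infinite subset of $\Z$ containing $0$, and distinct $S$ yield distinct $A_S$, producing $2^{\aleph_0}$ examples. Comparing cardinalities, uncountably many such $A$ fail to occur as any $D(M,N)$; in particular at least one does not, which proves the theorem.

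The main obstacle is the topological input in the second paragraph, namely that closed $n$-manifolds yield only countably many oriented homotopy types. Everything else is a routine cardinality comparison, but I would take care to specify which category of manifolds (smooth, PL, or topological) is intended, so that the correct homotopy-finiteness theorem is applied; once this category is fixed the remainder of the argument goes through essentially without further work.
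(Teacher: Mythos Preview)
Your proposal is correct and follows essentially the same approach as the paper: a cardinality comparison between the countably many possible sets $D(M,N)$ (via countability of homotopy types of closed manifolds) and the uncountably many infinite subsets of $\Z$ containing $0$. The paper additionally isolates an elementary version for triangulable (hence smooth or PL) manifolds by counting finite simplicial complexes directly, but otherwise the arguments coincide.
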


Our result  is in fact stronger,  contrasting the amount of arbitrary sets of integers with those that arise from purely topological data (i.e. homotopy types and mapping degrees), showing thus that ``most" arbitrary infinite subsets of $\Z$ (containing zero) are not realizable as mapping degree sets. 
Thus, we suggest a refined version of Problem \ref{problem}:

\begin{prob}\label{finite}
Suppose $A$ is a finite set of integers containing zero. Does $A=D(M,N)$ for some closed $n$-manifolds $M$ and $N$?
\end{prob}

To obtain some better intuition for $D(M,N)$, we review several simple cases  in the following example. 
For a finite set $A$, we use $|A|$ to denote the cardinality of $A$.

\begin{exam} \label{simple}
Suppose $M$ and $N$ are closed oriented $n$-manifolds.
\begin{itemize}
\item[(i)] If $n=1$, then $D(M,N)=\Z$.
\item[(ii)] If $n=2$, then $D(M,N)$ is either $\Z$ or the integer interval $[-k, k]$ for some $k\ge 0$.
\item[(iii)] If $N$ is covered by the $n$-sphere $S^n$, then 
\[
D(M,N)=\{d+|\pi_1(N)|\Z \ |\ \text{for some integers} \ d \in [1 ,  |\pi_1(N)|]\}.
\]
\end{itemize}
\end{exam}

The above results are known. We give an argument for the less well-known case (iii)
(see also \cite{Ol} or \cite[Theorem 1]{SWWZ}): The degree of the covering $S^n\to N$ is $|\pi_1(N)|$. Since $D(S^n,S^n)=\Z$, we obtain  $|\pi_1(N)|\Z\subseteq D(S^n,N)$. If $l\in D(M,N)$, then $l+|\pi_1(N)|\Z\subseteq D(M,N)$, because $M=M\#S^n$ (see Lemma \ref{l:degreeconnected}). Thus, $D(M,N)=\{l+|\pi_1(N)|\Z \ | l\in D(M,N)\}$. Since for each $l\in \Z$, $\{l+|\pi_1(N)|\Z\}=\{d  +|\pi_1(N)|\Z\}$ for some $d \in [1 ,  |\pi_1(N)|]$, case (iii) follows. 

Cases (i) and (ii) in Example \ref{simple} are arithmetic progressions (infinite or finite) of constant difference 1, and case (iii) is a union of finitely many infinite arithmetic progressions  of constant difference $|\pi_1(N)|$.  
These observations motivate the following question -- also a refinement of Problem \ref{problem} -- from a number theoretic point of view:
 
\begin{prob}\label{arith} 
Can every  arithmetic progression  containing zero  be realised as $D(M,N)$ for some closed oriented $n$-manifolds $M,N$?
\end{prob}

We give an affirmative answer to Problem \ref{arith} for finite sets:

\begin{thm}\label{main1} 
Every finite arithmetic progression of integers containing zero can be realised as $D(M,N)$ for some closed oriented $3$-manifolds $M,N$.
\end{thm}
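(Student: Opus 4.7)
The plan is to realise $A = \{kd : -b \leq k \leq a\}$ (integers $a, b, d \geq 0$) as $D(M, N)$ by taking $N$ a closed hyperbolic $3$-manifold and $M$ a connected sum of covers of $N$ with prescribed orientations. The degenerate cases $d = 0$ or $a = b = 0$ give $A = \{0\}$: take $M = S^3$ and $N$ any closed hyperbolic $3$-manifold; every map $S^3 \to N$ lifts to the contractible universal cover $\mathbb{H}^3$ and so is null-homotopic, giving $D(S^3, N) = \{0\}$. Assume henceforth $d \geq 1$ and $(a, b) \neq (0, 0)$.

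The first ingredient is a pinching principle for aspherical targets: for $N$ closed aspherical of dimension $n$ and $M_1, \ldots, M_k$ closed oriented $n$-manifolds,
\[
D(M_1 \# \cdots \# M_k, N) = D(M_1, N) + \cdots + D(M_k, N),
\]
where the right-hand side denotes the Minkowski sum of sets of integers. The inclusion $\supseteq$ is realised by the pinch-and-fold construction: given $f_i : M_i \to N$, the composition $M_1 \# \cdots \# M_k \to M_1 \vee \cdots \vee M_k \xrightarrow{f_1 \vee \cdots \vee f_k} N$ (collapsing the connect-sum spheres, then folding) has degree $\sum_i \deg(f_i)$. The reverse inclusion uses that $\pi_2(N) = 0$ lets one compress any connect-sum sphere in a given map to a point, combined with the classification $[X, N] \cong \mathrm{Hom}(\pi_1 X, \pi_1 N)/\mathrm{conj}$ for aspherical $N$ and the Seifert--van Kampen identity $\pi_1(M_1 \# \cdots \# M_k) = \pi_1(M_1) * \cdots * \pi_1(M_k)$.

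The second ingredient is a building block: for every $d \geq 1$, produce closed oriented hyperbolic $3$-manifolds $N$ and $P$ with $D(P, N) = \{0, d\}$. Take $N$ hyperbolic with no orientation-reversing self-homeomorphism and with $\pi_1(N)$ surjecting onto $\Z/d$ (arrangeable since one may impose $b_1(N) > 0$ by hyperbolic Dehn surgery), and let $P$ be the regular $d$-fold cover corresponding to the kernel, chosen so that $P$ also admits no orientation-reversing self-homeomorphism. Then $P$ is hyperbolic with $\mathrm{vol}(P) = d \cdot \mathrm{vol}(N)$. Any map $f : P \to N$ of nonzero degree induces $f_* : \pi_1(P) \to \pi_1(N)$ with finite-index image $H$; a cohomological-dimension argument (both sides have $\mathrm{cd} = 3$) forces $f_*$ to be injective, so $H \cong \pi_1(P)$, and Mostow rigidity identifies the corresponding cover of $N$ with $P$ itself, yielding $|\deg(f)| = [\pi_1(N) : H] = \mathrm{vol}(P)/\mathrm{vol}(N) = d$. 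Absence of orientation-reversing self-homeomorphisms on $N$ and $P$ fixes the sign to $+d$, giving $D(P, N) = \{0, d\}$. Writing $\bar P$ for $P$ with the opposite orientation, $D(\bar P, N) = -D(P, N) = \{-d, 0\}$.

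Assembling, set
\[
M = \underbrace{P \# \cdots \# P}_{a} \# \underbrace{\bar P \# \cdots \# \bar P}_{b};
\]
the pinching principle then yields $D(M, N) = a\{0, d\} + b\{-d, 0\} = \{kd : -b \leq k \leq a\} = A$. The principal obstacle is the building block: the rigidity of hyperbolic $3$-manifolds cleanly reduces $D(P, N)$ to $\{0, d\}$ once the set-up is in place, but one still must verify the existence of hyperbolic $3$-manifolds $N$ (and regular $d$-fold covers $P$) such that both $N$ and $P$ lack orientation-reversing self-homeomorphisms while admitting a surjection $\pi_1(N) \twoheadrightarrow \Z/d$, uniformly in $d \geq 1$ --- this is achievable, e.g., by starting from an explicit cusped hyperbolic $3$-manifold of positive first Betti number and closing up via Thurston's hyperbolic Dehn surgery.
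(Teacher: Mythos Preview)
Your overall architecture---take $N$ aspherical so that the connected-sum identity
\[
D(M_1\#\cdots\#M_k,N)=D(M_1,N)+\cdots+D(M_k,N)
\]
holds (this is the paper's Lemma~\ref{l:degreeconnected}), then build $M$ from copies of a block realising $\{0,d\}$ and $\{-d,0\}$---is exactly the paper's strategy. The paper, however, uses as its block the circle bundle $K_i$ over a fixed closed hyperbolic surface with Euler number $i$: Lemma~\ref{l:degreecircle} computes $D(K_i,K_j)=\{0,j/i\}$ when $i\mid j$ (and $\{0\}$ otherwise) directly from how Euler numbers transform under fibre-preserving maps, with no appeal to Mostow rigidity or to the existence of exotic hyperbolic manifolds.

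Your hyperbolic building block has a genuine gap. The assertion ``a cohomological-dimension argument (both sides have $\mathrm{cd}=3$) forces $f_*$ to be injective'' is false. There exist nonzero-degree maps between closed hyperbolic $3$-manifolds that are \emph{not} $\pi_1$-injective: any degree-one map $M\to N$ between non-homeomorphic closed hyperbolic $3$-manifolds (and many such pairs are known, e.g.\ via the constructions of Boileau--Wang or Kawauchi's imitation theory) is $\pi_1$-surjective with nontrivial kernel, since a $\pi_1$-isomorphism would force a homeomorphism by Mostow. Equality of cohomological dimension of domain and image places no constraint on the kernel of a surjection. With injectivity gone, nothing in your argument rules out degrees strictly between $0$ and $d$; simplicial volume only yields $|\deg f|\le \mathrm{vol}(P)/\mathrm{vol}(N)=d$, not equality. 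Thus $D(P,N)=\{0,d\}$ is not established for your $P,N$. (The sign step is also incomplete: ruling out $-d$ requires more than the absence of orientation-reversing self-maps on $P$ and $N$ separately, since the image subgroup $H=f_*(\pi_1 P)$ need not coincide with the original covering subgroup, and the corresponding cover $N_H$ could be orientation-reversingly isometric to $P$ without either $P$ or $N$ admitting such a self-map.) The paper's Euler-number calculation for circle bundles avoids all of these difficulties.
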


Theorem \ref{main1} will be a corollary of the more general realisation Theorem \ref{main}, which is probably somehow involved to be stated in the introduction. As we shall see in Section \ref{s:proofrealizability}, Theorem \ref{main} has also other consequences concerning  Problem \ref{finite}.

Prompted by Problem \ref{arith} and Theorem \ref{main1}, we further ask the following:

\begin{prob}\label{geom}
Together with 0,  can every geometric progression of integers  be realised as $D(M,N)$ for some closed oriented $n$-manifolds $M,N$?
\end{prob}

We  give a slightly more restrictive (compared to the case of arithmetic progressions), but still substantial, answer to Problem \ref{geom}:

\begin{thm}\label{main3} 
Together with 0,  every finite geometric progression of positive integers starting from 1 can be realised as $D(M,N)$ for some closed oriented manifolds $M,N$.
\end{thm}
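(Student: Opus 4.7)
My plan is to realize $A = \{0, 1, q, q^2, \ldots, q^k\}$ as $D(M, N)$ by combining an atomic realization of $\{0, 1, q\}$ with a product construction. For the trivial case $k = 0$, where $A = \{0, 1\}$, I would take $M = N$ to be an asymmetric closed hyperbolic $n$-manifold of dimension at least $3$, so that Mostow rigidity plus asymmetry forces $D(M, M) = \{0, 1\}$.

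For $k \geq 1$ and $q \geq 2$, I would first realize the atomic three-element set $\{0, 1, q\}$ as $D(X, Y)$ for some pair of closed oriented $d$-manifolds. For $q = 2$ this is the arithmetic progression $\{0, 1, 2\}$, delivered by Theorem \ref{main1}. For $q \geq 3$, $\{0, 1, q\}$ is neither an arithmetic progression nor a two-element subset sum, so a bespoke construction is required; one natural candidate involves a pair of 3-manifolds in which the source is a $q$-fold cover of the target and simultaneously admits a degree-one map, with intermediate degrees ruled out by rigidity. Given such an atomic pair $(X, Y)$, I would set $M = X_1 \times \cdots \times X_k$ and $N = Y_1 \times \cdots \times Y_k$, each $(X_i, Y_i)$ a dimension-shifted rigid copy of $(X, Y)$ with dimensions $d_i$ chosen pairwise incommensurable (Sidon-like), so that the K\"unneth formula gives $H^{d_i}(N) = H^{d_i}(Y_i)$ and similarly for $M$. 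Product maps $f_1 \times \cdots \times f_k$ with each $\deg f_i \in \{0, 1, q\}$ then realize exactly the degrees $\prod_i \deg f_i \in \{0\} \cup \{q^j : 0 \leq j \leq k\}$, giving $A \subseteq D(M, N)$.

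The reverse inclusion would follow from a product-rigidity argument: choosing each atomic pair aspherical with pairwise \emph{independent} fundamental groups (no nontrivial homomorphisms between distinct factors) forces every map $M \to N$ to be homotopic to a product map, so that its degree lies in $A$. I expect the two main obstacles to be (i) the atomic realization of $\{0, 1, q\}$ for $q \geq 3$, which sits outside the reach of Theorems \ref{main1} and \ref{main2}, and (ii) the product-rigidity statement that excludes spurious cross-factor maps from contributing extra degrees. The interplay between cohomological constraints (via K\"unneth) and $\pi_1$-rigidity of the building blocks will be the key technical content of the argument.
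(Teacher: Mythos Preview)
Your two-step outline---realize $\{0,1,q\}$ atomically, then take a $k$-fold product---is exactly the paper's route. For obstacle~(i) the paper's Theorem~\ref{1,k} builds (writing $d$ for your $q$) the $3$-manifolds $Q=(\#_d K_p)\#K_d\#K_{d^2}$ and $P=K_p\#K_{d^2}$, where $p>d$ is prime and $K_i$ is the circle bundle of Euler number $i$ over a fixed hyperbolic surface: the $d$-fold cover $(\#_d K_p)\#K_d\to P$ and the pinch onto the summand $P\subset Q$ give degrees $d$ and $1$, while Lemmas~\ref{l:degreecircle} and~\ref{l:degreeconnected} yield $D(Q,P)\subseteq D(Q,K_p)\cap D(Q,K_{d^2})=\{0,1,\ldots,d\}\cap\{0,1,d,d+1\}$. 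For obstacle~(ii) the paper does \emph{not} vary the dimensions of the factors, nor does it show that every map $M\to N$ is homotopic to a product map. Instead all $k$ factors stay $3$-dimensional, with a \emph{distinct} prime $p_i$ used in each pair $(Q_i,P_i)$; coprimality of $p_i$ with every Euler number appearing in $Q_1,\ldots,Q_{i-1}$ forces any map $Q_1\times\cdots\times Q_{i-1}\to P_i$ to vanish on $H_3(\,\cdot\,;\Q)$ (via Proposition~\ref{homologybasis} and Lemma~\ref{non-prod}), and this together with the non-product-domination of $P_i$ feeds into the cohomological product formula Theorem~\ref{t:product} to give $D(\prod_j Q_j,\prod_j P_j)=\prod_j D(Q_j,P_j)$ by induction. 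Your proposed mechanisms---Sidon-type dimension shifts and $\pi_1$-rigidity forcing product maps---could be made to work, but the first imports the extra burden of realizing $\{0,1,q\}$ in many different dimensions, and the second is a stronger conclusion than is actually needed; the paper's ``same dimension, distinct primes'' device combined with a purely cohomological degree-factorisation argument is more economical.
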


Theorem \ref{main3} will also follow from a more general realisation result (Theorem \ref{productdegrees}).

\vskip 0.2 true cm

\noindent{\bf Ideas of the proofs:} The ideas for the proofs of the above results can be outlined quickly: (i) The proof of Theorem \ref {non} is based on the idea of using countability; (ii) Both 3-manifolds $M$ and $N$ in Theorem \ref{main} (Theorem \ref{main1})  will be connected sums of certain circle bundles over surfaces with non-zero Euler classes, which in turn determine the mapping degree sets between those circle bundles (Lemma \ref{l:degreecircle}); (iii) Both manifolds $M$ and $N$ in Theorem \ref{productdegrees} (Theorem \ref{main3})  will be products of 3-manifolds which are of the forms stated in (ii). 
As we shall see in the course of the proofs, both the constructions and verifications in (ii) and (iii) are somewhat delicate, especially for Theorem \ref{productdegrees}.

\begin{rem}
Since in all of the constructions in this paper we will be using aspherical $3$-manifolds as building blocks,  our manifolds will have non-trivial fundamental groups. Thus, a further natural refinement of Problem \ref{problem} and of its variations would be to consider similar realisability questions for simply connected manifolds. 
\end{rem}

\noindent{\bf Acknowledgments.}
When parts of this project were carried out in the summer of 2021,
C. Neofytidis was visiting  MPIM Bonn and the University of Geneva, S.C. Wang and Z.Z. Wang were visiting IASM of Zhejiang University.
We thank all these institutes.

Professors Yi Liu, Shengkui Ye, and especially  Jianzhong Pan, helped us to provide  a proof on a first draft of this paper that there are only countably many homotopy types of closed manifolds. Subsequently, Professors Jean-Fran\c cois Lafont and Shmuel Weinberger pointed out that this latter fact is a theorem of M. Mather \cite{Ma}.
We thank all of them.

Finally, we thank the anonymous referees for their suggestions.

\section{Non-realisability for infinite sets}\label{s:non-realisable}

Theorem \ref{non},  a negative answer to Problem \ref{problem},  now follows quickly from the idea of using countability.
In fact, if we restrict to closed oriented smooth manifolds, the proof becomes very elementary.

\begin{proof}[Proof of Theorem  \ref{non}] 
Let $\Z^*$ be the set of all non-zero integers.  Since  $\Z^*$ has uncountably many subsets and countably many
finite sets, it has uncountably many infinite subsets. In particular, $\Z$ has uncountably many infinite subsets containing zero.
Thus, in order to prove Theorem \ref{non}, we only need  to prove the following:

\medskip

\noindent{\bf Claim:} For every $n$, there are only countably many integer 
sets $D(M,N)$ of pairs of closed oriented $n$-manifolds $(M,N)$.

\medskip

We first prove the Claim  for triangulable closed oriented $n$-manifolds, which is elementary, and 
already contains all closed oriented smooth or piecewise linear manifolds.

First, fix the dimension $n$. For each integer  $k\ge 0$, there are only finitely many simplical complexes consisting of $k$ simplices.
In particular, there are only finitely many closed $n$-manifolds consisting of $k$ simplices. By induction on $k$, there are only countably many closed triangulable $n$-manifolds. Thus, there are only countably many pairs $(M,N)$ of closed triangulable $n$-manifolds.
Then, by induction on $n$, there are only countably many pairs $(M,N)$ of closed triangulable $n$-manifolds in all dimensions $n$.
It follows that there are only countably many integer sets $D(M,N)$ of closed oriented triangulable $n$-manifolds $(M,N)$ in all dimensions $n$.

Now we discuss the general case.
Let $M$, $N$, $X$ and $Y$ be closed oriented $n$-manifolds. Suppose $X$ and $Y$  are homotopy equivalent to 
$M$ and $N$ respectively. Then 
\[
D(M,N)=D(X,Y).
\]
Following the argument given in the triangulable case, we need to prove that there are only countably many homotopy classes of closed 
oriented $n$-manifolds. This is a theorem of Mather~\cite[Corollary, p. 93]{Ma}.
\end{proof}
 
\section{Realisability for finite arithmetic progressions}\label{s:proofrealizability}

 Theorem \ref{main1} is a special case of the following more general realisation result, which will be proven in the end of this section.
 
\begin{thm}\label{main}
For any $k\in \N_+$ and any integers 
\[
d_1,d_2,...,d_k>0 \ \text{and} \
n_1,n_1',n_2,n_2',...,n_k,n_k'\ge 0,
\] 
there exist closed oriented $3$-manifolds $M,N$ such that 
\[
D(M,N)=\{d\in\mathbb{Z} \ | \ d=\sum_{i=1}^{k}{m_id_i},\,\,-n_i'\le m_i \le n_i\}.
\]
\end{thm}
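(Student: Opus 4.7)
The plan is to take $N$ to be a single closed oriented hyperbolic $3$-manifold $N_0$, and build $M$ as a connected sum of finite covers of $N_0$ with various orientations. Choose $N_0$ to be chiral (no orientation-reversing self-homeomorphism) and to admit, for each $i=1,\ldots,k$, a finite cover $\tilde N_i\to N_0$ of degree $d_i$ with the property that $D(\tilde N_i,N_0)=\{0,d_i\}$. Such a simultaneous choice is available in principle: closed hyperbolic $3$-manifolds have an abundance of finite covers (residual finiteness, Agol's virtually special theorem), and by Mostow--Thurston rigidity every non-zero degree map between closed hyperbolic $3$-manifolds is homotopic to a covering map, so only the covering degree $d_i$ can occur in $D(\tilde N_i,N_0)$. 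Write $\overline{\tilde N_i}$ for $\tilde N_i$ with reversed orientation, so that $D(\overline{\tilde N_i},N_0)=\{0,-d_i\}$. Set
$$M=\mathop{\#}_{i=1}^{k}\Bigl(\underbrace{\tilde N_i\#\cdots\#\tilde N_i}_{n_i\text{ copies}}\#\underbrace{\overline{\tilde N_i}\#\cdots\#\overline{\tilde N_i}}_{n_i'\text{ copies}}\Bigr),\qquad N=N_0.$$

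\textbf{Realisation ($\supseteq$).} Given a tuple $(m_1,\ldots,m_k)$ with $-n_i'\le m_i\le n_i$, write $m_i=a_i-b_i$ with $0\le a_i\le n_i$ and $0\le b_i\le n_i'$. Define $f\colon M\to N_0$ by using the covering map on $a_i$ of the $\tilde N_i$-summands (contributing degree $+d_i$ each) and on $b_i$ of the $\overline{\tilde N_i}$-summands (contributing degree $-d_i$ each), the constant map on the remaining summands, and by sending every connecting $2$-sphere of the connected-sum decomposition of $M$ to a common basepoint $p_0\in N_0$. Additivity of degree across the summand decomposition gives $\deg f=\sum_i(a_i-b_i)d_i=\sum_i m_id_i$.

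\textbf{Upper bound ($\subseteq$).} Let $f\colon M\to N_0$ be arbitrary. Since $N_0$ is aspherical, $\pi_2(N_0)=0$, so the restriction of $f$ to each connecting $2$-sphere of $M$ is null-homotopic in $N_0$; by successive homotopies (and a path-homotopy to unify basepoints) we may assume that every such sphere is sent to a single point $p_0\in N_0$. The map $f$ then restricts to each summand $X_\ell$, yielding (after capping the removed balls by the constant map to $p_0$) a map $X_\ell\to N_0$ of some degree $c_\ell\in D(X_\ell,N_0)$, and $\deg f=\sum_\ell c_\ell$. By the choices in Step~1, each $D(X_\ell,N_0)$ equals $\{0,d_i\}$ or $\{0,-d_i\}$ according to whether $X_\ell=\tilde N_i$ or $\overline{\tilde N_i}$, so $\deg f$ lies in the claimed sumset.

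\textbf{Main obstacle.} The main technical step is the simultaneous existence of $N_0$ and of covers $\tilde N_i$ with $D(\tilde N_i,N_0)=\{0,d_i\}$ for all $i$. This requires ruling out both orientation-reversing self-maps of $N_0$ and alternative (non-standard) coverings $\tilde N_i\to N_0$ whose degrees differ from $d_i$, which is a delicate condition on the subgroup structure of $\pi_1(N_0)$; nevertheless it can be arranged by combining rigidity with a careful choice of finite-index subgroups of a suitably generic closed hyperbolic $3$-manifold group.
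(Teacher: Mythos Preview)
Your overall architecture matches the paper's: choose an aspherical $3$-manifold $N$, find building blocks $X$ with $D(X,N)=\{0,\pm d_i\}$, form $M$ as their connected sum, and use $\pi_2(N)=0$ to get $D(M,N)=\sum D(X,N)$ (this is exactly the paper's Lemma~\ref{l:degreeconnected}). The paper, however, takes $N=K_{d'}$ with $d'=d_1\cdots d_k$ and building blocks $K_{\pm d'/d_i}$, circle bundles over a fixed closed hyperbolic surface; Lemma~\ref{l:degreecircle} then gives $D(K_i,K_j)=\{0,j/i\}$ whenever $i\mid j$ by a short Euler-number computation, so the needed degree sets are exhibited explicitly with nothing left to arrange.

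Your hyperbolic variant has a genuine gap precisely at the ``main obstacle'' you flag. The claim that every non-zero degree map between closed hyperbolic $3$-manifolds is homotopic to a covering is not what rigidity gives: Gromov--Thurston strict rigidity yields this only when $|\deg f|$ equals the volume ratio $\mathrm{vol}(\tilde N_i)/\mathrm{vol}(N_0)=d_i$; degrees strictly between $0$ and $d_i$ are not excluded. Such an intermediate degree would arise from a $\pi_1$-surjection with non-trivial kernel from $\pi_1(\tilde N_i)$ onto the fundamental group of some cover of $N_0$, and there is no general principle ruling this out --- indeed, degree-one maps between non-homeomorphic closed hyperbolic $3$-manifolds are known to exist. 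On top of this, you also need a single $N_0$ that admits covers of every prescribed degree $d_1,\dots,d_k$ (residual finiteness gives arbitrarily large indices, not every index) and that admits no orientation-reversing covering from any $\tilde N_i$. Your final sentence asserts all of this ``can be arranged'' for a suitably generic $N_0$, but supplies no argument; discharging that existential claim would be essentially the entire content of the proof. The paper's circle-bundle construction sidesteps every one of these difficulties.
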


Corollary \ref{interval} below is  a general form of Theorem \ref{main1}.
A  finite sequence of integer intervals 
\[
\{[b_i, c_i], i=1,2,...,l\}
\]
is called arithmetic, if the lengths of all $[b_i, c_i]$ are equal,
and all the differences $b_{i+1}-b_i$ are equal. When $b_i=c_i$, we obtain a usual finite arithmetic progression.

\begin{cor}[Theorem \ref{main1}]\label{interval} 
Every finite arithmetic sequence of integer intervals containing zero can be realised as $D(M,N)$ for some closed $3$-manifolds $M,N$. In particular, every finite arithemetic progression containing zero is realisable as a mapping degree set.
\end{cor}

\begin{proof}
Suppose 
$\{[b_i, c_i], i=1,2,...,l\}$ is a finite arithmetic sequence of integer  intervals,
where $b_i\le c_i< b_{i+1}$, and $0\in [b_k, c_k]$ for some $1\le k\le l$.
Let
\[
n_1=c_k,\,\,  n_1'=-b_k,\,\, d_2=b_2-b_1,\,\, n_2=l-k,\,\, n_2'=k-1.
\]
 Since $\{b_i, \ i=1,...,l\}$ is an arithmetic sequence with constant difference $d_2$, we have 
$b_i=b_k+d_2(i-k)=-n_1'+d_2(i-k)$.
Similarly, $c_i=c_k+d_2(i-k)=n_1+d_2(i-k)$.
Thus,
\begin{equation*}
\begin{aligned}
A=\bigcup_{i=1}^{l}{[b_i,c_i]}
&=\bigcup_{i=1}^{l}[-n_1'+d_2(i-k),n_1+d_2(i-k)]\\
&=\bigcup_{j=1-k}^{l-k}[-n_1'+d_2j,n_1+d_2j]\\
&=\bigcup_{j=-n_2'}^{n_2}[-n_1'+d_2j,n_1+d_2j]\\
&=\bigcup_{j=-n_2'}^{n_2}\{d\in\Z \ | \ d=m_1+jd_2,\,\,-n_1'\le m_1\le n_1\}\\
&=\bigcup_{m_2=-n_2'}^{n_2}\{d\in\Z \ | \ d=m_1+m_2d_2,\,\,-n_1'\le m_1\le n_1\}\\
&=\{d\in\Z \ | \ d=m_1+m_2d_2,\,\,-n_i'\le m_i\le n_i\}.
\end{aligned}
\end{equation*}
The proof follows by Theorem \ref{main} for $k=2$ and $d_1=1$.
\end{proof}

Another consequence of Theorem \ref{main} is the following:

\begin{cor}\label{main2} 
 Let $A=\{d_1,...,d_l\}$ be a finite set of integers containing zero. There are closed oriented $3$-manifolds $M$ and $N$ such that 
\begin{equation*}\label{eq.A=D}
D(M,N)=\biggl\{\sum_{j\in S}d_j \ | \ S\subseteq\{1,...l\}\biggl\}.
\end{equation*}
\end{cor}
\begin{proof}\
Set
$
n_1'=\cdots=n_k'=0,\,\, n_1=\cdots=n_k=1
$
in Theorem \ref{main}.
\end{proof}

Now we are going to prove Theorem \ref{main}. We need some more preparations.

\medskip

Given a circle bundle $S^1\to K\to\Sigma$, where $\Sigma$ is a closed oriented surface, the Euler number of $K$ is defined by the Kronecker product
\[
\hat{e}(K)=\langle e(K),[\Sigma]\rangle,
\]
where $e(K)\in H^2(\Sigma;\Z)=\Z$ denotes the Euler class of $K$.

The following lemma determines the mapping degree sets when running over all Euler numbers for a fixed hyperbolic surface. 

\begin{lem}\label{l:degreecircle}
Let $\Sigma$ be a closed oriented hyperbolic surface and $K_i\stackrel{p_i}\longrightarrow\Sigma$ be the circle bundle with Euler number $\hat{e}(K_i)=i$. Then
\begin{equation}\label{eq.mappingcircle}
D(K_i,K_j)= \left\{\begin{array}{ll}
       \{0,\frac{j}{i}\}, & \text{if} \ i\mid j\\
       \text{} & \\
        \{0\}, & \text{if} \ i\nmid j.
        \end{array}\right.
\end{equation}
Moreover, all of the non-zero degree maps are homotopic to coverings.
\end{lem}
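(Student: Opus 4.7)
The plan is to exploit the fact that, for $\Sigma$ closed hyperbolic, any nonzero-degree map $f\colon K_i \to K_j$ can be homotoped to a fiber-preserving map, and then to analyse $\deg f$ via its base and fiber components.

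For realisability when $i \mid j$, set $b = j/i$: the $b$-th power map on circle fibers is equivariant for $z \mapsto z^b$ on $U(1)$, so it defines a bundle map $K_i \to E'$ covering $\id_\Sigma$, where $E'$ is the principal $U(1)$-bundle obtained by extending the structure group of $K_i$ along the $b$-th power homomorphism. The Chern-class identity $c_1(L^{b}) = b\cdot c_1(L)$ on associated line bundles gives $\hat e(E') = bi = j$, so $E' \cong K_j$, and the resulting map has fiber degree $b$, base degree $1$, and total degree $j/i$. Conversely, suppose $f$ has nonzero degree. Both $K_i, K_j$ are aspherical, and since $\pi_1(\Sigma)$ has trivial center, the center of each of $\pi_1(K_i), \pi_1(K_j)$ is precisely the fiber subgroup; consequently $f_{*}$ carries the fiber subgroup of $\pi_1(K_i)$ into that of $\pi_1(K_j)$. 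Standard obstruction theory (or Seifert-fibered rigidity in the sense of Waldhausen and Jaco--Shalen) then shows $f$ is homotopic to a fiber-preserving map inducing $\bar f\colon \Sigma \to \Sigma$ of some degree $a$ and a map of degree $b$ on fibers, with $\deg f = ab$.

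Naturality of the Euler class forces $bi = aj$: factoring $f$ (up to homotopy) as $K_i \to \bar f^{*}K_j \to K_j$, the second map has fiber degree $1$ over $\bar f$ and so $\hat e(\bar f^{*}K_j) = aj$, while the first is a bundle morphism over $\Sigma$ of fiber degree $b$, giving $\hat e(\bar f^{*}K_j) = bi$ by the same Chern-class identity as above. Since $\Sigma$ is closed hyperbolic, $|\deg \bar f| \leq 1$ by the simplicial-volume (or Euler-characteristic) inequality; non-vanishing of $\deg f$ forces $a = \pm 1$, whence $b = aj/i$. This is an integer only when $i \mid j$, and then $\deg f = ab = a^{2}(j/i) = j/i$ in every case. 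Combined with the construction above, this yields $D(K_i, K_j) = \{0, j/i\}$ when $i \mid j$ and $\{0\}$ otherwise. Finally, a degree $\pm 1$ self-map of a closed hyperbolic surface is homotopic to a homeomorphism (Dehn--Nielsen--Baer), and after conjugating, the fiber-preserving $f$ becomes a $|j/i|$-fold fibrewise covering, establishing the second assertion.

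The main technical obstacle is justifying the homotopy to a fiber-preserving representative: one must track how $f_{*}$ acts on the central fiber element, then extend a suitable bundle map across the $2$-skeleton of $\Sigma$ via obstruction theory. Once this step is secured, the rest reduces to Chern-class bookkeeping together with the simplicial-volume bound on self-maps of $\Sigma$.
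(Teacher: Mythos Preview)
Your argument is correct and follows essentially the same route as the paper's proof: reduce a nonzero-degree map to a fiber-preserving one via the centre of $\pi_1$ and asphericity, force the base map $\bar f$ to have degree $\pm 1$ by hyperbolic rigidity, and then read off the Euler-class constraint $bi=aj$. The paper obtains this last relation by citing Neumann--Raymond and Scott, whereas you derive it through the pullback factorisation $K_i\to \bar f^{*}K_j\to K_j$; the realisability construction via tensor powers of the associated line bundle is identical to the paper's.
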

\begin{proof} For $s=i,j$, we have a surjection $\pi_1(K_s)\stackrel{{p_s}_*}\longrightarrow\pi_1(\Sigma)$
with kernel  $\Z=[t]$ represented by an $S^1$ fiber $t$ (see \cite[Lemma 3.2]{Sc}), so that this normal subgroup $\Z\subseteq\pi_1(K_s)$ belongs to the center $Z(\pi_1(K_s))$ of $\pi_1(K_s)$; see~\cite[p. 118]{He}. Now, if $x\in  Z(\pi_1(K_s))$, then ${p_s}_*(x)\in Z(\pi_1(\Sigma))$.
 Since $\Sigma$ is a hyperbolic surface, $Z(\pi_1(\Sigma))$ is trivial, therefore $x$ is in the kernel of ${p_s}_*$, that is, $x\in \Z$.
 Thus, $Z(\pi_1(K_s))=\pi_1(S^1)=\Z$. Note that this fact can be also obtained from \cite[Sec. IV. 3]{Br}.

Let $f\colon K_i\to K_j$ be a map of non-zero degree. Since the center of $\pi_1(\Sigma)$ is trivial, after lifting $f$ to a $\pi_1$-surjective map $K_i\to\overline{K_j}$ (where $\overline{K_j}$ is the cover of $K_j$ corresponding to $f_*(\pi_1(K_i))$), we deduce that the center of $\pi_1(K_i)$ is mapped trivially in $\pi_1(\Sigma)$ under the induced homomorphism $(p_2\circ f)_*\colon\pi_1(K_i)\to\pi_1(\Sigma)$. Thus, by the asphericity of our spaces, there is a map $\bar{f}\colon\Sigma\to\Sigma$ such that $\bar{f}\circ p_1=p_2\circ f$ up to homotopy.

Since $\deg(f)\neq0$, we conclude that $\deg(\bar{f})\neq0$. Hyperbolic surfaces do not admit self-maps of degree greater than one, hence $\deg(\bar{f})=\pm1$. In particular $\bar{f}$ is $\pi_1$-surjective. Since $\pi_1(\Sigma)$ is Hopfian, we conclude that $\bar{f}$ induces an isomorphism on $\pi_1(\Sigma)$ and thus, since $\Sigma$ is aspherical, $\bar{f}$ is a homotopy equivalence. The Borel conjecture is true for aspherical surfaces, hence $\bar{f}$ is homotopic to a homeomorphism. Since every self-map of the circle is homotopic to a covering map, we deduce that $f$ is homotopic to a fiber-preserving covering of degree 
\[
\deg(f)=\deg(\bar{f})\deg(f|_{S^1})=\pm\deg(f|_{S^1}).
\]
Moreover, by~\cite{NR} (see~\cite[Theorem 3.6]{Sc}), we obtain
\[
\hat{e}(K_i)=\hat{e}(K_j)\frac{\deg(\bar{f})}{\deg(f|_{S^1})}=\frac{\hat{e}(K_j)}{\deg(f)}.
\]
This can happen only if $i\mid j$. We deduce that
\[
D(K_i,K_j)\subseteq\biggl\{0,\frac{j}{i}\biggl\}, \ \text{if} \ i\mid j, \ \text{and} \ D(K_i,K_j)=\{0\}, \ \text{if} \ i\nmid j.
\]
We still need to show that $\frac{j}{i}\in D(K_i,K_j)$, whenever $\frac{j}{i}\in\Z$ (see~\cite[Example 1.4]{Ne2}): Since $K_i$ is fiberwise oriented, it is a principal $U(1)$-bundle, and hence can be viewed as the associated complex line bundle whose first Chern number is $c_1(K_i)=\hat{e}(K_i)=i$.  The tensor product of $\frac{j}{i}$ copies of $K_i$ has first Chern number
\[
c_1(\otimes^{\frac{j}{i}}K_i)=\frac{j}{i}c_1(K_i)=\frac{j}{i}\hat{e}(K_i)=j=\hat{e}(K_j).
\]
Hence, $\otimes^{\frac{j}{i}}K_i\cong K_j$. The $\frac{j}{i}$-th power of a section of $K_i$ gives us a fiberwise covering map $$f\colon K_i\to\otimes^{\frac{j}{i}}K_i,$$ which is of degree $\frac{j}{i}$ on the $S^1$-fibers and of degree one on $\Sigma$. In particular, 
\[
\deg(f)=\frac{j}{i}\in D(K_i,K_j),
\]
showing (\ref{eq.mappingcircle}).
\end{proof}

Recall that given sets of integers $A_i$, $i=1,...,k$, the sum of $A_i$ is defined to be
\[
\sum_{i=1}^k A_i =\biggl\{\sum _{i=1}^k a_i \,| \,a_i\in A_i\biggl\}.
\]
When $A_1,...,A_k$ are equal to the same $A$, we often denote $\sum_{i=1}^k A_i$ by $\sum^k A$.

\medskip

The next lemma provides a connection between $D(M_1\#M_2,N)$ and $D(M_1,N)+D(M_2,N)$.

\begin{lem}\label{l:degreeconnected}
Let $M_1,M_2$ and $N$ be closed oriented manifolds of dimension $n$. Then
\begin{equation}\label{eq.connected}
D(M_1,N)+D(M_2,N)\subseteq D(M_1\#M_2,N),
\end{equation}
with equality if $\pi_{n-1}(N)=0$. 
\end{lem}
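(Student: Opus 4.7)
The plan for the forward inclusion is classical. Writing
\[
M_1 \# M_2 = (M_1 \setminus \mathrm{int}(D_1)) \cup_{S^{n-1}} (M_2 \setminus \mathrm{int}(D_2))
\]
for embedded $n$-disks $D_i \subset M_i$, I would use the pinch map $c\colon M_1 \# M_2 \to M_1 \vee M_2$ that collapses the separating $(n-1)$-sphere $S$ to a point; this relies on the elementary observation that $(M_i \setminus \mathrm{int}(D_i))/\partial$ is homotopy equivalent to $M_i$ (since $D_i$ is contractible). Given $f_i\colon M_i \to N$ of degree $d_i$, after adjusting basepoints one assembles a single map $g\colon M_1 \vee M_2 \to N$ that restricts to $f_i$ on each summand, and sets $f := g \circ c$. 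A quick chase in top homology yields $c_\ast[M_1 \# M_2] = [M_1] + [M_2]$ inside $H_n(M_1 \vee M_2) \cong \Z[M_1] \oplus \Z[M_2]$, so $\deg(f) = d_1 + d_2$. This gives the inclusion $D(M_1,N) + D(M_2,N) \subseteq D(M_1 \# M_2, N)$.

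For the reverse inclusion under the hypothesis $\pi_{n-1}(N) = 0$, the plan is as follows. Given an arbitrary $f\colon M_1 \# M_2 \to N$ of degree $d$, the restriction $f|_S$ represents a class in $\pi_{n-1}(N)=0$ and is therefore null-homotopic. Using a bicollar $S \times [-1,1]$ of $S$ together with the homotopy extension property of the CW pair $(M_1 \# M_2, S)$, I would deform $f$ through a homotopy to a map $\tilde f$ that sends all of $S$ to a single point $p \in N$. Then $\tilde f$ descends along the quotient $M_1 \# M_2 \to (M_1 \# M_2)/S \simeq M_1 \vee M_2$, producing maps $f_i\colon M_i \to N$ whose degrees satisfy $d = \deg(\tilde f) = \deg(f_1) + \deg(f_2)$, by the same homological splitting used in the forward direction. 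Hence $d \in D(M_1,N) + D(M_2,N)$.

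The main technical obstacle is the factorisation step: one has to verify carefully that collapsing $S$ really identifies the quotient with the wedge $M_1 \vee M_2$ up to homotopy equivalence (so that genuine maps $f_i\colon M_i \to N$ can be extracted from $\tilde f$), and that the degrees of these extracted maps line up with the homological decomposition of the fundamental class. The hypothesis $\pi_{n-1}(N) = 0$ is used solely, but essentially, to null-homotope $f|_S$; without it, the separating sphere can carry a genuinely nontrivial homotopy class, and $\deg(f)$ need not split as a sum of degrees coming from the two summands.
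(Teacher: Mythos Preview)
Your proposal is correct and follows essentially the same approach as the paper: both use the pinch map $M_1\#M_2\to M_1\vee M_2$ to build a degree-$(d_1+d_2)$ map for the forward inclusion, and both use the hypothesis $\pi_{n-1}(N)=0$ to null-homotope $f|_{S^{n-1}}$ and thereby factor an arbitrary $f$ through the wedge for the reverse inclusion. Your account is slightly more explicit about the homotopy extension step and the identification $(M_1\#M_2)/S\simeq M_1\vee M_2$, while the paper routes the forward construction through $N\vee N$ via the fold map, but these are purely cosmetic differences.
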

\begin{proof}
For $i=1,2$, let $f_i\colon M_i\to N$ be maps of degree $d_i$. Consider the following composite map
\[
f\colon M_1\#M_2 \stackrel{q}\longrightarrow M_1\vee M_2\stackrel{f_1\vee f_2}\longrightarrow N\vee N\stackrel{h}\longrightarrow N,
\]
where $q$ is the map that pinches the connecting $S^{n-1}$ to a point and $h$ is a homeomorphism that maps each copy of $N$ to itself. Then in degree $n$ homology
\begin{equation*}
\begin{aligned}
H_n(f)([M_1\#M_2])  & =H_n(h)\circ H_n(f_1\vee f_2)\circ H_n(q)([M_1\#M_2])\\
                                         & = H_n(h)\circ H_n(f_1\vee f_2)([M_1],[M_2])\\
                                         & = H_n(h)(d_1[M_1],d_2[M_2])\\
                                         & =(d_1+d_2)[N],
\end{aligned}
 \end{equation*}
which shows inclusion (\ref{eq.connected}).

Suppose now $\pi_{n-1}(N)=0$ and let $f\colon M_1\#M_2\to N$ be a map of non-zero degree. Since any map $S^{n-1}\to N$ is null-homotopic, we deduce that $f$ factors through the pinch map $q\colon M_1\#M_2\to M_1\vee M_2$, that is, there is a continuous map $g\colon M_1\vee M_2\to N$ such that $f=g\circ q$. Hence, in degree $n$ homology we have
\begin{equation*}
\begin{aligned}
\deg(f)[N]  & =H_n(f)([M_1\#M_2])\\
                                         & =H_n(g)\circ H_n(q)([M_1\#M_2])\\
                                         & =H_n(g)([M_1],[M_2])\\
                                         & =(d_1+d_2)[N],
\end{aligned}
 \end{equation*}
where $H_n(g|_{M_i})([M_i])=d_i[N]$, i.e. $d_i\in D(M_i,N)$, for $i=1,2$. This shows the inclusion
\[
D(M_1\#M_2,N)\subseteq D(M_1,N)+D(M_2,N).
\]
\end{proof}

We are now ready to prove Theorem \ref{main}:

\begin{proof}[Proof of Theorem \ref{main}]
Set 
\[
d'=d_1d_2...d_k,  \, \text{and} \, d_i'=d'/d_i, \ i=1,...,k.
\]
Let
\[
N=K_{d'}, \ M_i=K_{d_i'} \ \text{and} \ M_i'=K_{-d_i'}
\]
be circle bundles over a closed oriented hyperbolic surface $\Sigma$ with Euler numbers
\[
\hat{e}(N)=d', \ \hat{e}(M_i)=d_i' \ \text{and} \ \hat{e}(M_i')=-d_i'
\]
respectively.

Since $d'/d'_i=d_i$, Lemma \ref{l:degreecircle} tells us that
\begin{equation}\label{eq.d_i}
D(M_i,N)=D(K_{d'_i}, K_{d'})=\{0,d_i\}.
\end{equation}
Similarly,
\begin{equation}\label{eq.-d_i}
D(M_i',N)=\{-d_i,0\}.
\end{equation}

Let 
\begin{equation*}
M=\#_{i=1}^{k}((\#_{n_i}{M_i})\#(\#_{n_i'}{M_i'})).
\end{equation*}
Since $N$ is aspherical, in particular $\pi_2(N)=0$, we apply  Lemma \ref{l:degreeconnected} successively to obtain

\[
D(M,N)=\sum_{i=1}^{k}{(\sum_{j_i=1}^{n_i}{D(M_i,N)}+\sum_{j_i=1}^{n_i'}D(M_i',N))}.
\]
By (\ref{eq.d_i}) and (\ref{eq.-d_i}),
$\sum_{j_i=1}^{n_i}{D(M_i,N)}+\sum_{j_i=1}^{n_i'}{D(M_i',N)}$ is the sum of  $n_i$ copies of $\{0, d_i\}$ and of $n'_i$ copies of $\{0, -d_i\}$.
Hence,
\[
\sum_{j_i=1}^{n_i}{D(M_i,N)}+\sum_{j_i=1}^{n_i'}{D(M_i',N)}=\{m_id_i \ |  -n_i'\le m_i\le n_i\}.
\]
We conclude that
$$D(M,N)=\{d\in\Z \ | \ d=\sum_{i=1}^{k}{m_id_i},\,\,-n_i'\le m_i\le n_i\},$$
finishing the proof of Theorem \ref{main}.
\end{proof}

\section{Realisability for finite geometric progressions}\label{s:proofrealizability2}

 Theorem \ref{main3} about finite geometric progressions
  is a straightforward consequence of the following more general realisability result.

\begin{thm}\label{productdegrees}
Given integers $1\leq d_1 \leq d_2 \leq\cdots \leq d_l$, there exist closed oriented $3l$-manifolds $M$ and $N$ such that 
\[
D(M,N)=\{0,1\}\cup\biggl\{\prod_{j\in S}d_j \ | \ \emptyset\ne S\subseteq \{1,2,...,l\}\biggl\}.
\]
\end{thm}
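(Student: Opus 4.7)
The plan is to take $M$ and $N$ to be products of the 3-manifolds provided by Theorem \ref{1,k} and to run an induction on $l$ that repeatedly invokes Theorem \ref{t:product}. For each $i$ with $d_i>1$, Theorem \ref{1,k} furnishes closed oriented 3-manifolds $Q_i=(\#_{d_i} K_{q_i})\#K_{d_i}\#K_{d_i^2}$ and $P_i=K_{q_i}\#K_{d_i^2}$ with $D(Q_i,P_i)=\{0,1,d_i\}$, and I would pick the auxiliary primes $q_i$ to be pairwise distinct and strictly larger than $\max_j d_j$. When $d_i=1$ the statement of Theorem \ref{1,k} does not apply, so I would simply take $Q_i=P_i=K_{q_i}$; Lemma \ref{l:degreecircle} then gives $D(Q_i,P_i)=\{0,1\}=\{0,1,d_i\}$. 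Set $M=\prod_{i=1}^l Q_i$ and $N=\prod_{i=1}^l P_i$, both closed oriented of dimension $3l$. Iterating Lemma \ref{l:product} immediately yields $\prod_{i=1}^l\{0,1,d_i\}\subseteq D(M,N)$, and expanding the product gives exactly $\{0,1\}\cup\{\prod_{j\in S}d_j:\emptyset\neq S\subseteq\{1,\dots,l\}\}$, the required inclusion $\supseteq$.

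For the reverse inclusion I would induct on $l$, at each stage applying Theorem \ref{t:product} with the decomposition $M=Q_l\times W$ and $N=P_l\times Z$, where $W=\prod_{i<l}Q_i$ and $Z=\prod_{i<l}P_i$. The key point is that the first target factor $P_l$ is a single 3-manifold rather than a product, so hypothesis (i) of Theorem \ref{t:product} has a chance of holding. To verify (i), namely that $P_l$ is not dominated by direct products, I observe that any non-zero degree map $A\times B\to P_l$ composed with the pinch map $P_l\to K_{q_l}$ would give a non-zero degree map from a product of a surface and a circle into $K_{q_l}$, contradicting Lemma \ref{non-prod} since $q_l\neq 0$; the same argument also covers the degenerate case $P_l=K_{q_l}$ when $d_l=1$.

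Hypothesis (ii), that every map $f\colon W\to P_l$ induces the trivial homomorphism $H_3(W;\Q)\to H_3(P_l;\Q)$, is the main obstacle, and I would handle it via Proposition \ref{homologybasis}, which supplies a basis of $H_3(W;\Q)$ consisting of factor classes $[Q_j]$, surface-times-circle classes, and 3-torus classes. The surface-times-circle and 3-torus generators are represented by maps $F\times S^1\to W$ (respectively $T^3\to W$), so their images in $H_3(P_l;\Q)=\Q$ are precisely the degrees of the resulting composite maps to $P_l$; the pinch-map trick from (i) together with Lemma \ref{non-prod} forces these to vanish. A factor class $[Q_j]$ with $j<l$ is sent to the degree of the composition $Q_j\hookrightarrow W\to P_l$, which lies in $D(Q_j,P_l)$; Lemma \ref{connectedsumdegrees} gives $D(Q_j,P_l)\subseteq D(Q_j,K_{q_l})$, and Lemma \ref{l:degreeconnected} (applicable because $\pi_2(K_{q_l})=0$) decomposes this into sums of $D(K_{q_j},K_{q_l})$, $D(K_{d_j},K_{q_l})$, $D(K_{d_j^2},K_{q_l})$, or just $D(K_{q_j},K_{q_l})$ when $d_j=1$. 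By Lemma \ref{l:degreecircle} together with the arithmetic of the chosen primes, each of these sets is $\{0\}$: the primes $q_j$ and $q_l$ are distinct, so $q_j\nmid q_l$, and neither $d_j$ nor $d_j^2$ can divide the prime $q_l>\max_i d_i$ unless they equal $1$.

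With both hypotheses confirmed, Theorem \ref{t:product} yields $D(M,N)=D(Q_l,P_l)\cdot D(W,Z)$, and the inductive hypothesis (base case $l=1$ being Theorem \ref{1,k} itself) gives $D(W,Z)=\prod_{i<l}\{0,1,d_i\}$, so $D(M,N)=\prod_{i=1}^l\{0,1,d_i\}$, exactly the target set. The main technical difficulty is verifying condition (ii) of Theorem \ref{t:product}, and Proposition \ref{homologybasis} is indispensable here: without a homological basis tailored to the product structure, there is no systematic way to control every possible map from a product of 3-manifolds into the single 3-manifold $P_l$.
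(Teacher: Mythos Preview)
Your proof is correct and follows essentially the same route as the paper: the same building blocks $Q_i,P_i$ from Theorem~\ref{1,k}, the same induction via Theorem~\ref{t:product}, and the same verification of its hypotheses (i) and (ii) using Proposition~\ref{homologybasis} together with Lemmas~\ref{non-prod}, \ref{connectedsumdegrees}, \ref{l:degreeconnected}, and \ref{l:degreecircle}. Your separate handling of the case $d_i=1$ by setting $Q_i=P_i=K_{q_i}$ is a small but welcome refinement, since Theorem~\ref{1,k} as stated applies only to $d>1$.
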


\begin{proof}[Proof of Theorem \ref{main3} from Theorem \ref{productdegrees}]
Let $d_1=d_2=\cdots=d_l=d$.  Then Theorem \ref{productdegrees} implies
 \[
 D(M, N)=\{0, 1, d,  d^2,..., d^l\}.
 \]
\end{proof}

We will devote the rest of this section to the proof of Theorem \ref{productdegrees}.

\medskip

For brevity, we say that a closed oriented $n$-manifold $M$ {\em dominates} (resp. 1-{\em dominates}) another closed oriented $n$-manifold $N$ if there exists a  map  $f\colon M\to N$ of  non-zero degree (resp. of degree one).

We begin with some easy observations:

\begin{lem}\label{connectedsumdomination}
Given any closed oriented $n$-manifolds $M$ and $N$, there is a $1$-domination $M\# N\to N$.
\end{lem}

\begin{proof} 
This follows from Lemma \ref{l:degreeconnected}; in fact it is contained in the proof of Lemma \ref{l:degreeconnected}. Namely, consider the following composite map
\[
M\#N \stackrel{q}\longrightarrow M\vee N\stackrel{h}\longrightarrow N,
\]
where $q$ pinches the connecting $S^{n-1}$ to a point, and $h$ sends $M$ to that point.
\end{proof}

We denote the degree 1 map $M\# N\to N$ in Lemma \ref{connectedsumdomination} by $p$ and also call it a {\em pinch map}.

\begin{lem}\label{connectedsumdegrees}
Let $M,N_1$ and $N_2$ be closed oriented $n$-manifolds. Then 
\[
D(M,N_1\#N_2)\subseteq D(M,N_1).
\]
\end{lem}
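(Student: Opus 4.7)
The plan is to use the pinch map from Lemma \ref{connectedsumdomination} to transfer any map $M\to N_1\#N_2$ to a map $M\to N_1$ of the same degree. More precisely, let $d\in D(M,N_1\#N_2)$ and pick a map $f\colon M\to N_1\#N_2$ with $\deg(f)=d$. By Lemma \ref{connectedsumdomination} applied to $N_1\# N_2$, there is a pinch map $p\colon N_1\#N_2\to N_1$ of degree one.

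I would then consider the composition
\[
p\circ f\colon M\longrightarrow N_1\#N_2\longrightarrow N_1,
\]
and use multiplicativity of the degree to compute
\[
\deg(p\circ f)=\deg(p)\cdot\deg(f)=1\cdot d=d.
\]
This shows $d\in D(M,N_1)$, which gives the desired inclusion $D(M,N_1\#N_2)\subseteq D(M,N_1)$.

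There is essentially no obstacle here: the argument is a one-line consequence of the previous lemma together with multiplicativity of degree. The only thing to be careful about is orientation conventions, namely that the pinch map $p$ is constructed so that it has degree $+1$ (not $-1$) with respect to the chosen orientations of $N_1\#N_2$ and $N_1$, which is precisely the convention adopted in Lemma \ref{connectedsumdomination}.
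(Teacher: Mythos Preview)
Your proof is correct and is essentially identical to the paper's own argument: take any $f\colon M\to N_1\#N_2$ of degree $d$, compose with the degree-one pinch map $p\colon N_1\#N_2\to N_1$ from Lemma~\ref{connectedsumdomination}, and conclude $d\in D(M,N_1)$ by multiplicativity of the degree.
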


\begin{proof}
 Suppose $l\in D(M,N_1\#N_2)$ and $f\colon M\to N_1\#N_2$ be a map of degree $l$. Let the composition
 \[
M \stackrel{f}\longrightarrow N_1\# N_2 \stackrel{p}\longrightarrow N_1,
\]
where $p$ is the pinch map given by Lemma \ref{connectedsumdomination}. Then $p\circ f$ is of degree $l$, so $l\in D(M,N_1)$.
\end{proof}

The following result is a special case of Theorem \ref{productdegrees}, as well as a crucial step to prove Theorem \ref{productdegrees}.

\begin{thm}\label{1,k}
For any integer $d>1$, there exist closed oriented $3$-manifolds $Q$ and $P$ such that $D(Q,P)=\{0,1,d\}$. 
\end{thm}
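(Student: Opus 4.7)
To realize $\{0,1,d\}$ as $D(Q,P)$, I first observe a necessary structural constraint: since post-composition gives $D(Q,P)\cdot D(P,P)\subseteq D(Q,P)$ and $1\in D(Q,P)$, we must have $D(P,P)\subseteq\{0,1,d\}$. As $\{0,1,d\}$ is not multiplicatively closed for $d>1$ (since $d^{2}\notin\{0,1,d\}$), this forces $D(P,P)\subseteq\{0,1\}$, so $P$ must be \emph{rigid} in the sense that its only self-mapping degrees are $0$ and $1$.

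The naive candidate is $P=K_{d}$, a circle bundle over a closed hyperbolic surface with Euler number $d$; by Lemma~\ref{l:degreecircle}, $D(P,P)=\{0,1\}$. Taking $Q=K_{1}\#K_{d}$ and combining Lemmas~\ref{l:degreecircle} and~\ref{l:degreeconnected} yields $D(Q,P)=\{0,1\}+\{0,d\}=\{0,1,d,d+1\}$, which contains the spurious element $d+1$. The obstacle is structural: when $P$ is aspherical, Lemma~\ref{l:degreeconnected} gives the exact equality $D(Q_{1}\#Q_{2},P)=D(Q_{1},P)+D(Q_{2},P)$, which unavoidably produces $1+d=d+1$.

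My plan is to break this equality by using a target with nontrivial $\pi_{2}$, so that Lemma~\ref{l:degreeconnected} degrades to a one-sided inclusion and Lemma~\ref{connectedsumdegrees} contributes a useful upper bound on the other side. Concretely, I would set $P=H\#(S^{2}\times S^{1})$, where $H$ is a closed hyperbolic 3-manifold with $b_{1}(H)=0$, and take $Q$ to be the $d$-fold cyclic cover of $P$ corresponding to the map $\pi_{1}(P)=\pi_{1}(H)*\Z\to\Z/d$ that kills $\pi_{1}(H)$ and surjects onto $\Z/d$ from the $\Z$ factor. A Kurosh-type analysis identifies $Q=\#^{d}H\#(S^{2}\times S^{1})=P\#(\#^{d-1}H)$, so the pinch map $Q\to P$ provides $1\in D(Q,P)$ and the covering map provides $d\in D(Q,P)$.

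The main steps would then be: (i) verify $D(P,P)=\{0,1\}$, using Mostow rigidity of $H$ together with the fact that $D(X,S^{2}\times S^{1})=\{0\}$ whenever $X$ is closed aspherical with $b_{1}(X)=0$ (the universal cover argument: maps from a rationally acyclic aspherical $X$ into $S^{2}\times S^{1}$ factor through $S^{1}$ up to homotopy), combined with Lemma~\ref{connectedsumdegrees}; (ii) confirm $\{0,1,d\}\subseteq D(Q,P)$ via the pinch and covering maps; (iii) show $D(Q,P)\subseteq\{0,1,d\}$ by bounding $D(Q,P)\subseteq D(Q,H)\cap D(Q,S^{2}\times S^{1})$ via Lemma~\ref{connectedsumdegrees} and computing each factor. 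The main obstacle is step (iii): for composite $d$, the cyclic cover admits intermediate cyclic covers for each divisor $k\mid d$, producing spurious degrees $k$ via pinch-cover compositions $Q\to Q_{k}\to P$. To prevent this, I would refine the construction by choosing $H$ so that $\pi_{1}(P)=\pi_{1}(H)*\Z$ admits a \emph{maximal} subgroup of index $d$ (not arising from a cyclic quotient), giving a cover $Q\to P$ with no intermediate factorization and only the two nonzero degrees $1$ and $d$ realized.
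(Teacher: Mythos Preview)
Your proposal contains a genuine gap, and the diagnosis you offer for it is incorrect.

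For the cyclic construction $P=H\#(S^{2}\times S^{1})$, $Q=(\#^{d}H)\#(S^{2}\times S^{1})$, you in fact obtain $D(Q,P)=\{0,1,\dots,d\}$ for \emph{every} $d\ge 2$, not merely spurious divisors of $d$ when $d$ is composite. Indeed, for each $1\le k\le d$ the degree-one pinch $Q\to(\#^{k}H)\#(S^{2}\times S^{1})$ followed by the $k$-fold cyclic cover $(\#^{k}H)\#(S^{2}\times S^{1})\to P$ gives a degree-$k$ map. Conversely your own bound $D(Q,P)\subseteq D(Q,H)=\{0,1,\dots,d\}$ (via Lemmas~\ref{connectedsumdegrees} and~\ref{l:degreeconnected}) shows nothing else occurs. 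So the set is exactly $\{0,1,\dots,d\}$.

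Your suggested fix---replacing the cyclic cover by a cover corresponding to a maximal subgroup of index $d$---does not address this. Maximality only rules out intermediate covers of \emph{that particular} covering map; it says nothing about other maps $Q\to P$ obtained by pinching summands and then covering. The structural problem is the choice of $S^{2}\times S^{1}$ as a summand of $P$: since $D(S^{2}\times S^{1},S^{2}\times S^{1})=\Z$, pinching $Q$ onto its $S^{2}\times S^{1}$ summand already gives $D(Q,S^{2}\times S^{1})=\Z$, so the intersection bound from Lemma~\ref{connectedsumdegrees} for that summand is vacuous and the only useful constraint comes from the $H$ summand alone.

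The paper avoids this by taking \emph{both} summands of $P$ to be aspherical circle bundles, $P=K_{q}\#K_{d^{2}}$ with $q>d$ prime, and $Q=(\#_{d}K_{q})\#K_{d}\#K_{d^{2}}$. Each $K_{*}$ has $\pi_{2}=0$, so Lemma~\ref{l:degreeconnected} computes $D(Q,K_{q})=\{0,1,\dots,d\}$ and $D(Q,K_{d^{2}})=\{0,1,d,d+1\}$ exactly; their intersection is $\{0,1,d\}$, and the pinch and the $d$-fold fibrewise cover $K_{d}\to K_{d^{2}}$ realise $1$ and $d$. The point you were missing is that one needs \emph{two} summands in $P$ each of which yields a computable, restrictive intersection---breaking asphericity of $P$ is achieved by the connected sum itself, not by inserting a flexible piece like $S^{2}\times S^{1}$.
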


\begin{proof} 
Let $q>d$ be a prime number, and consider the following manifolds, where, as in Section \ref{s:proofrealizability}, $K_i$ denotes the $S^1$-bundle over a fixed hyperbolic surface with Euler number $i$:
\[
Q=(\#_d K_q)\# K_d\#K_{d^2} \ \ \text{and} \ \ P= K_q\#K_{d^2}.
 \]
 Let $Q_1=(\#_dK_q)\#K_d$. By Lemma \ref{l:degreecircle}, $K_d$ is a $d$-fold covering of $K_{d^2}$, and so we obtain a covering 
 \begin{equation}\label{eq.d-fold}
Q_1=(\#_dK_q)\#K_d\to K_q\#K_{d^2}=P
\end{equation}
of degree $d$. 
Note that
\[
Q=P\#(\#_{d-1}K_q)\#K_{d}=
Q_1\#K_{d^2}.
\]
By Lemma \ref{connectedsumdomination}, there are $1$-dominations $Q\to Q_1$ and $Q\to P$. Together with (\ref{eq.d-fold}), we deduce 
\begin{equation}\label{eq.inclusion1}
\{0,1,d\}\subseteq D(Q,P).
\end{equation}

We will now show the converse inclusion. Lemma \ref{connectedsumdegrees} implies that
\begin{equation}\label{eq.subsetintersection}
 D(Q,P)\subseteq D(Q,K_q)\cap D(Q,K_{d^2}).
 \end{equation}
Since $K_q$ is aspherical, in particular $\pi_2(K_q)=0$,  Lemma \ref{l:degreeconnected} implies that
 \[
 D(Q,K_q)=\sum^{d}D(K_q,K_q)+D(K_d,K_q)+D(K_{d^2},K_q).
 \]
Since $d$ and $q$ are coprime, Lemma \ref{l:degreecircle} tells us that
\[
D(K_q,K_q)=\{0,1\},
\]
\[D(K_d,K_q)=D(K_{d^2},K_q)=\{0\},
\] 
and thus 
\begin{equation}\label{eq.QKq}
D(Q, K_q)=\{0,1,...,d\}. 
\end{equation}
Applying the same argument we obtain
\begin{equation}\label{eq.QKd^2}
D(Q,K_{d^2})=\{0,1,d,d+1\}. 
\end{equation}
Then by (\ref{eq.subsetintersection}), (\ref{eq.QKq}) and (\ref{eq.QKd^2}) we have
\begin{equation}\label{eq.inclusion2}
D(Q,P)\subseteq\{0,1,...,d\}\cap\{0,1,d,d+1\}=\{0,1,d\}.
\end{equation}

The theorem follows by (\ref{eq.inclusion1}) and (\ref{eq.inclusion2}).
\end{proof}

Equipped with Theorem \ref{1,k}, we will be able to prove Theorem \ref{productdegrees} by using  products of suitable 3-manifolds. To do this we still 
need some preparations. 

\medskip

Recall that given sets of integers $A_i$, $i=1,...,k$, the product of $A_i$ is defined to be  

\[
\prod_{i=1}^k A_i =\biggl\{\prod_{i=1}^k a_i \,| \,a_i\in A_i\biggl\}.
\]
When $A_1,... ,A_k$ are equal to the same $A$, we often denote $\prod_{i=1}^k A_i$ by $\prod^k A$.

\medskip

We begin with a straightforward observation:

\begin{lem}\label{l:product} Given closed oriented $n$-manifolds $M,N$ and $m$-manifolds $W,Z$, we have
$$D(M,N)\cdot D(W,Z)\subseteq D(M\times W, N\times Z).$$
\end{lem}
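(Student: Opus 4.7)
The plan is to take an arbitrary element of the left-hand side and exhibit a map realising it as a degree in $D(M\times W, N\times Z)$. So let $a\in D(M,N)$ and $b\in D(W,Z)$, and pick maps $f\colon M\to N$ of degree $a$ and $g\colon W\to Z$ of degree $b$. The obvious candidate to realise the product $ab$ is the product map $f\times g\colon M\times W\to N\times Z$, and the whole argument amounts to verifying that $\deg(f\times g)=ab$.

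To verify this, I would compute on fundamental classes, using naturality of the homology cross product. Since $M,N,W,Z$ are all closed oriented manifolds, the fundamental class of a product equals the cross product of the fundamental classes, i.e.\ $[M\times W]=[M]\times[W]$ and $[N\times Z]=[N]\times[Z]$. Naturality of the cross product under continuous maps gives
\[
(f\times g)_*([M]\times[W]) \;=\; f_*[M]\times g_*[W] \;=\; (a[N])\times(b[Z]) \;=\; ab\,[N\times Z].
\]
Therefore $\deg(f\times g)=ab$, so $ab\in D(M\times W,N\times Z)$, proving the claimed inclusion.

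There is no real obstacle here; the only point requiring any care is to ensure one is using the product orientation on $M\times W$ and $N\times Z$ consistently, which is exactly what makes $[M\times W]=[M]\times[W]$ hold by definition. Everything else is a direct application of the Künneth formula together with functoriality of the cross product.
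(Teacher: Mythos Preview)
Your proof is correct and follows exactly the same approach as the paper: take maps $f$ and $g$ realising the given degrees and form the product map $f\times g$. The paper simply asserts that $\deg(f\times g)=\deg(f)\deg(g)$, whereas you spell out the verification via the homology cross product and fundamental classes, but the argument is the same.
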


\begin{proof}
Let $f\colon M\to N$ and $g\colon W\to Z$ be maps of degree $k$ and $l$ respectively.
By taking products of manifolds and maps, we obtain a map 
$
f\times g\colon M\times W\to N\times Z
$
of degree $kl$.
 \end{proof}

The converse inclusion to Lemma \ref{l:product} fails in general~\cite[Example 1.2]{Ne1}.
Nevertheless, Theorem \ref{t:product} below, which is a generalisation of~\cite[Theorem 1.4]{Ne1},  
gives some sufficient conditions so that equality holds. This
will be important in proving Theorem \ref{productdegrees}. 

\begin{thm}\label{t:product}
Let $M, N$ be two closed oriented manifolds of dimension $n$ and $W, Z$ of dimension $m$. Suppose
\begin{itemize}
\item[(i)] $N$ is not dominated by  direct products, and
\item[(ii)]   for any  map $W\to N$, the induced homomorphism $H_n(W, \Q)\to H_n(N; \Q)$ is trivial.
\end{itemize}
Then $D(M\times W, N\times Z)=D(M,N)\cdot D(W,Z)$.
\end{thm}

Before giving the proof of Theorem \ref{t:product}, we first make some remarks, mostly around Thom's work~\cite{Thom} on Steenrod's realisation problem.

\begin{rem} \
\begin{itemize}
\item[(1)] In \cite[Theorem 1.4]{Ne1}, condition (ii) is stated in cohomology, while 
in Theorem \ref{t:product} we chose to state condition (ii) in homology, since it is more direct in its application to the proof of Theorem \ref{productdegrees},
and also Thom's Realisation Theorem \cite{Thom}, which is needed in the proof Theorem \ref{t:product}, arises naturally in homology.
\item[(2)] Recall that Thom's Realisation Theorem states the following: Let $X$ be a topological space. For each  $\omega\in H_n(X; \Z)$, there is an integer $d>0$ and a map $f\colon M\to X$,
where $M$ is a closed oriented $n$-manifold, such that $H_n(f)([M])=d\omega$. In particular, each  $\omega\in H_n(X;\Q)$
can be realised by a closed oriented $n$-manifold.
\item[(3)] Even though Thom's Realization Theorem is crucial for the proof of Theorem \ref{t:product}, it will not be essential for the the proof 
of Theorem \ref{productdegrees}, since in Theorem \ref{productdegrees} one can see directly that each homology class can be realised by a closed oriented manifold.
\end{itemize}
\end{rem}

Next, we fix some notation for the proof of Theorem \ref{t:product}: $[M]$ and $[M]^*$ denote the integer fundamental classes of $H_n(M;\Q)$ and $H^n(M;\Q)$ respectively. Also, let $\iota_M\colon M\hookrightarrow M\times W$ be  the inclusion, $p_M\colon M\times W\longrightarrow M$  the projection, and denote $[M]\otimes 1= H_n({\iota_M})([M])$ and $\omega_M= H^n({p_M})([M]^*)$. Similar notation will be used for $W, N$ and $Z$.

\begin{proof}[Proof of Theorem \ref{t:product}]
By Lemma \ref{l:product}, it suffices to show the inclusion $D(M\times W, N\times Z)\subseteq D(M,N)\cdot D(W,Z)$. 
Let $f\colon M\times W\to N\times Z$ be a map of degree $d\neq0$. 
We have 
\[
H_l(f)\colon H_l(M\times W;\Q)\to H_l(N\times Z;\Q) \ \text{and} \ H^l(f)\colon H^l(N\times Z;\Q)\to H^l(M\times W;\Q)
\]
for $l\in \{0, 1, ... , m+n\}$. By the K\"unneth formula in homology,
we have 
\begin{equation}\begin{array}{rcll}\label{Kuenneth}
H_n(M\times W;\Q)=\oplus_{i=0}^{n}(H_{n-i}(M;\Q)\otimes H_{i}(W; \Q))=\Q<[M]\otimes 1>\oplus  V_{M}, \\ 
H_n(N\times Z;\Q)=\oplus_{i=0}^{n}(H_{n-i}(N;\Q)\otimes H_{i}(Z; \Q))=\Q<[N]\otimes 1>\oplus  V_{N},
\end{array}\end{equation}
where $V_{M}=\oplus_{i=1}^{n}(H_{n-i}(M; \Q)\otimes H_{i}(W; \Q))$ and $V_{N}=\oplus_{i=1}^{n}(H_{n-i}(N;\Q)\otimes H_{i}(Z; \Q))$.
 
Consider the composition
\[
M\times W\stackrel{f}\longrightarrow N\times Z\stackrel{p_N}\longrightarrow N.
\]
The restriction of $H_n(p_N\circ f)$ to $\oplus_{i=1}^{n-1}(H_{n-i}(M;\Q)\otimes H_{i}(W; \Q))$ 
maps trivially to 
$H_n(N; \Q)$ by condition (i) and Thom's Realisation Theorem,   and the restriction to $H_n(W; \Q)$  maps trivially  to
$H_n(N;\Q)$ by condition (ii). Hence, we have that
$H_n(p_N\circ f)(V_{M})=0$, which implies that 
\begin{equation}\label{eq.fV_M}
H_n(f)(V_{M})\subseteq V_{ N}. 
\end{equation}
Suppose now 
\begin{equation}\label{eq.fM}
H_n(f)([M]\otimes 1)=\kappa\cdot [N]\otimes 1+\delta, 
\end{equation}
for some $\kappa\in\Z$ and $\delta\in V_{N}$. Then $\kappa\in D(M,N)$ and a map of degree $\kappa$ is given by
\[
M\stackrel{\iota_M}\hookrightarrow M\times W\stackrel{f}\longrightarrow N\times Z\stackrel{p_N}\longrightarrow N.
\]

We are going to verify that (\ref{eq.fV_M}) and (\ref{eq.fM}) imply that 
\begin{equation}\label{eq.PDf}
H^n(f)(\omega_N)=\kappa\cdot \omega_M.
\end{equation}
Since $p_M$ and $p_N$ are projections, we have 
\[
H_n(p_M)(V_{M})=0\ \text{and}  \ H_n(p_N)(V_{N})=0.
\]
Thus,
\begin{equation}\label{eq.V_Mzero}
\langle\omega_M,V_{M}\rangle=\langle H^n(p_M)([M]^*),V_{M}\rangle=\langle[M]^*,H_n(p_M)(V_{M})\rangle=0
\end{equation}
and
\begin{equation}\label{eq.V_Nzero}
\langle\omega_N,V_{N}\rangle=\langle H^n(p_N)([N]^*),V_{N}\rangle=\langle[N]^*,H_n(p_N)(V_{N})\rangle=0,
\end{equation}
where by $\langle\omega_X,V_{X}\rangle$ we mean the Kronecker product of $\omega_X$ with any class in $V_X$, for $X=M$ and $N$ in (\ref{eq.V_Mzero}) and (\ref{eq.V_Nzero}) respectively. In particular, 
\begin{equation}\label{eq.deltazero}
\langle\omega_N,\delta\rangle=0.
\end{equation}
By (\ref{eq.fM}) and (\ref{eq.deltazero}) 
$H^n(f)(\omega_N)$ and $\kappa\cdot \omega_M$ coincide on $[M]\otimes 1$:
\begin{equation}\label{eq.M}
\begin{aligned}
\langle H^n(f)(\omega_N),[M]\otimes 1\rangle&=\langle\omega_N,H_n(f)([M]\otimes 1)\rangle\\
&=\langle\omega_N,\kappa\cdot[N]\otimes 1+\delta\rangle\\
&=\langle\omega_N,\kappa\cdot[N]\otimes 1\rangle+\langle\omega_N,\delta\rangle\\
&=\kappa=\langle\kappa\cdot\omega_M,[M]\otimes 1\rangle.
\end{aligned}
\end{equation}
By (\ref{eq.fV_M}), (\ref{eq.V_Mzero}) and (\ref{eq.V_Nzero}), we have 
\begin{equation}\label{eq.V_M}
\langle H^n(f)(\omega_N),V_{M}\rangle=\langle\omega_N,H_n(f)(V_{M})\rangle=0=\langle\kappa\cdot\omega_M,V_{M}\rangle.
\end{equation}
Hence,  by (\ref{Kuenneth}), (\ref{eq.M}) and (\ref{eq.V_M}),
 we have
\[
\langle H^n(f)(\omega_N),z\rangle=\langle\kappa\cdot\omega_M, z\rangle,
\]
for all $z\in H_n(M\times W;\Q)$. By algebraic duality, we obtain (\ref{eq.PDf}).
Note that  (\ref{eq.PDf}) guarantees also that $\kappa\neq0$, because $H^*(f)$ with $\Q$-coefficients is injective, since $\deg(f)=d\neq0$.

The K\"unneth formula in cohomology tells us that
\[
H^m(M\times W;\Q)=\oplus_{i=0}^m (H^{m-i}(M;\Q)\otimes H^{i}(W;\Q)).
\]
 We have
\begin{equation}\label{eq.Z}
H^m(p_Z\circ f)(\omega_Z)=\sum_{i=0}^m \lambda_i(x_{m-i}\times y_i) \in H^m(M\times W;\Q),
\end{equation}
where $x_{m-i}\in H^{m-i}(M;\Q)$, $y_i\in H^i(W;\Q)$ and $\lambda_i\in\Q$.

By (\ref{eq.PDf}), (\ref{eq.Z}),
 the naturality of the cup product and the definition of $d$, we obtain
 \begin{equation*}
 \begin{aligned}
d\cdot\omega_M \times \omega_W&=H^{m+n}(f)(\omega_N \times \omega_Z)\\
&=H^n(f)(\omega_N) \times H^m(f)(\omega_Z)\\
&=\kappa\cdot\omega_M\times \sum_{i=0}^m \lambda_i(x_{m-i}\times y_i)\\
& =\kappa \lambda_m\cdot\omega_M \times \omega_W.
 \end{aligned}
 \end{equation*}
Hence,  $d=\kappa\lambda_m$, and $\lambda_m$ is realised as a mapping degree in $D(W,Z)$ by the map
\[
W\stackrel{\iota_W}\hookrightarrow M\times W\stackrel{f}\longrightarrow N\times Z\stackrel{p_Z}\longrightarrow Z,
\]
Since $d\in D(M\times W,N\times Z)$, $\kappa\in D(M,N)$ and $\lambda_m\in D(W,Z)$, we conclude
\[
D(M\times W, N\times Z)\subseteq D(M,N)\cdot D(W,Z).
\]
\end{proof}

The following fact is also needed  to prove Theorem \ref{productdegrees}.

 \begin{lem}\label{non-prod}\cite[Theorem 1]{Wa1}, \cite[Theorem 1]{KN}
 $K_i$ is dominated by the product of a surface and the circle if and only if $i=0$.
 \end{lem}
 
Now we describe a basis for the third homology group of products of 3-manifolds.

\begin{prop}\label{homologybasis}
Let $Q_1,...,Q_s$ be closed oriented $3$-manifolds and $Q=\prod_{i=1}^{s}Q_i$ be their product. 
Then there is a basis of $H_3(Q;\Q)$, which is represented by the following three classes of closed oriented 3-manifolds in $Q$:
\begin{itemize}
\item[(i)] $Q_1,...,Q_s$.
\item[(ii)] $P_1,...,P_r$, where each $P_i$ is a product of a closed orientable surface and  the circle.
\item[(iii)] Each $3$-manifold which is the $3$-dimensional torus (product of three circles).
\end{itemize}
\end{prop}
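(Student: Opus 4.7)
The plan is to apply the rational K\"unneth formula to $H_3(Q;\Q)$ and then realise each K\"unneth summand by closed oriented 3-manifolds of the three listed types. By iterated K\"unneth,
\[
H_3(Q;\Q)=\bigoplus_{i_1+\cdots+i_s=3}H_{i_1}(Q_1;\Q)\otimes\cdots\otimes H_{i_s}(Q_s;\Q).
\]
Since each $Q_j$ is a closed oriented 3-manifold, each index $i_j$ lies in $\{0,1,2,3\}$, so only three shapes of multi-index can contribute: (a) exactly one $i_j=3$ and the rest zero; (b) one $i_j=2$, one $i_k=1$ with $j\ne k$, and the rest zero; (c) exactly three indices equal to $1$ and the rest zero. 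I would show that the summands of shapes (a), (b), (c) admit bases realised respectively by manifolds of type (i), (ii), (iii).

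For shape (a), the summand is $H_3(Q_j;\Q)=\Q\langle[Q_j]\rangle$, and $[Q_j]$ is realised by the inclusion $Q_j\hookrightarrow Q$ sending the remaining coordinates to fixed basepoints; these account for the type (i) basis vectors. For shape (b), I would invoke Poincar\'e duality $H_2(Q_j;\Z)\cong H^1(Q_j;\Z)\cong[Q_j,S^1]$ to choose a basis of $H_2(Q_j;\Q)$ by classes $[\Sigma_a]$ of embedded closed oriented surfaces (preimages of regular values of smooth representatives $Q_j\to S^1$), and a basis of $H_1(Q_k;\Q)$ by classes $[\gamma_b]$ of embedded oriented loops. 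Each tensor $[\Sigma_a]\otimes[\gamma_b]$ corresponds under the K\"unneth isomorphism to the cross product $[\Sigma_a]\times[\gamma_b]$, which by naturality of the cross product is the pushforward of $[\Sigma_a\times\gamma_b]$ along $\Sigma_a\times\gamma_b\hookrightarrow Q_j\times Q_k\hookrightarrow Q$; since $\Sigma_a\times\gamma_b$ is a surface times a circle, these furnish the type (ii) basis vectors. For shape (c), taking bases of each $H_1(Q_{j_t};\Q)$ by loop classes $[\gamma_{a_t}]$, the tensor $[\gamma_{a_1}]\otimes[\gamma_{a_2}]\otimes[\gamma_{a_3}]$ is realised by $\gamma_{a_1}\times\gamma_{a_2}\times\gamma_{a_3}\cong T^3$ sitting inside $Q_{j_1}\times Q_{j_2}\times Q_{j_3}\subseteq Q$, giving the type (iii) basis vectors.

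Assembling these choices summand by summand yields a basis of $H_3(Q;\Q)$ in which every basis element is represented by a closed oriented 3-manifold in $Q$ of one of the three prescribed forms. The only mildly delicate point is the identification of the K\"unneth class $[\Sigma]\otimes[\gamma]$ with the image of $[\Sigma\times\gamma]$ under the product inclusion, which follows from naturality of the homology cross product together with the identities $[\Sigma\times S^1]=[\Sigma]\times[S^1]$ and $[T^3]=[S^1]\times[S^1]\times[S^1]$; everything else is a routine book-keeping of the K\"unneth decomposition.
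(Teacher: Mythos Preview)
Your proposal is correct and follows essentially the same approach as the paper: apply the K\"unneth decomposition, sort the summands into the three shapes, and realise each tensor basis element by the fundamental class of a product submanifold. The only differences are cosmetic---you justify the surface representatives of $H_2(Q_j;\Q)$ via $H^1(Q_j;\Z)\cong[Q_j,S^1]$ and regular values, whereas the paper simply cites this as known, and you spell out the cross-product identification a bit more explicitly.
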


\begin{proof}
Let $[Q_i] \in H_3(Q;\mathbb{Q})$ be the integer homology (fundamental) class presented by  $Q_i$ in the $Q$. Denote the first Betti number $b_1(Q_i)$ by $n_i$. 
Suppose that for each $1\le i \le s$
\[
\Sigma_{i,1},\Sigma_{i,2},...,\Sigma_{i, n_i}
\]
is a basis for $H_2(Q_i;\mathbb{Q})$ and
\[
c_{i,1},c_{i,2},...,c_{i,n_i}
\]
is a basis for $H_1(Q_i;\mathbb{Q})$. By the K\"unneth formula in homology we have
\begin{equation*}
\begin{aligned}
H_3(Q_1\times Q_2\times ....\times Q_s; \Q) = & \oplus_{i=1}^s (H_3(Q_i; \Q)\\
&\oplus(\mathop{\oplus}_{\substack{1\le i, j\le s \\ i\ne j}}H_2(Q_i; \Q)\otimes (H_1(Q_j; \Q))\\
&\oplus(\mathop{\oplus}_{\substack{1\le i<j<k \le s
}}H_1(Q_i; \Q)\otimes H_1(Q_j; \Q)\otimes H_1(Q_k; \Q)),
\end{aligned}
\end{equation*}
and the following three homology classes is a basis for $H_3(Q;\mathbb{Q})$:
\begin{itemize}
\item[(i)] $[Q_i], 1\le i \le s$;
\item[(ii)] $\Sigma_{i,i'}\otimes c_{j,j'}, 1\le i, j\le s,\,\,i\ne j\,,\, 1\le i' \le n_i,\,\, 1\le j' \le n_j$;
\item[(iii)] $ c_{i,i'}\otimes c_{j,j'}\otimes c_{k,k'},1\le i<j<k\le s$, 
$1\le i' \le n_i, 1\le j' \le n_{j}, 1\le k' \le n_{k}$.
\end{itemize}

We can always choose $\Sigma_{i,1},\Sigma_{i,2},...,\Sigma_{i,n_i}$ and $c_{i,1},c_{i,2},...,c_{i,n_i}$ to be integer homology classes,
and it is known that in the 3-manifold $Q_i$ any integer homology class $\Sigma_{i,i'}$ of dimension two can be presented by a closed orientable embedded surface $F_{i,i'}$ 
 and each homology class $c_{i,i'}$ of dimension one can be presented by an embedded circle $C_{i,i'}$. Then 
\[
\Sigma_{i,i'}\otimes c_{j,j'}=[F_{i,i'}\times C_{j,j'}],
\]
\[
c_{i,i'}\otimes c_{j,j'}\otimes c_{k,k'}=[C_{i,i'}\times C_{j,j'}\times C_{k,k'}].
\]
This finishes the proof of Proposition \ref{homologybasis}.
\end{proof}

We are now ready to prove Theorem \ref{productdegrees}.

\begin{proof}[Proof of Theorem \ref{productdegrees}]
Let 
\[
q_l>q_{l-1}>q_{l-2}>\cdots>q_2>q_1
\]
be prime numbers such that $q_1>d_l$. 

Following the proof of Theorem \ref{1,k}, let for all $i=1,...,l$
\[
Q_i=(\#_{d_i}K_{q_i})\#K_{d_i}\#K_{d_i^2} \ \ \text{and} \ \ P_i=K_{q_i}\#K_{d_i^2}.
\]
Note that $q_i>d_i$. By (the proof of) Theorem \ref{1,k}, we obtain
\[
D(Q_i,P_i)=\{0,1,d_i\}, \ i=1,...,l.
\]

Let the closed oriented $3l$-manifolds given by the products
 \[
 M=Q_1\times Q_2\times\cdots\times Q_l, \ \ \text{and } \ \ N=P_1\times P_2\times\cdots\times P_l.
\]
By taking products of maps (see Lemma \ref{l:product}), we obtain
\[
\{0,1\}\cup\biggl\{\prod_{j\in S}d_j \ | \ \emptyset\ne S\subseteq \{1,2,...,l\}\biggl\}\subseteq D(M,N).
\]
We thus only need to show that
\[
D(M,N)\subseteq \{0,1\}\cup\biggl\{\prod_{j\in S}d_j \ | \ \emptyset\ne S\subseteq \{1,2,...,l\}\biggl\}.
\]

\noindent{\bf Claim 1:} For each $1\le i \le l-1$, any map 
\[
f_i\colon Q_1\times Q_2 \times\cdots\times Q_i\to P_{i+1}
\]
induces the trivial homomorphism
\[
H_3(f_{i})\colon H_3(Q_1\times Q_2 \times\cdots\times Q_i;\Q)\to H_3(P_{i+1};\Q).
\]

\begin{proof}
Suppose the contrary; then there exists a homology class $h_3\in H_3(Q_1\times Q_2 \times\cdots\times Q_i;\Q)$ and a nonzero integer $d$ such that $H_3(f_{i})(h_3)=d[P_{i+1}]$. We will show that this is impossible.

By Proposition \ref{homologybasis} (and following the notation used in its proof), $h_3$ is a linear combination of the homology classes presented by  $Q_j,\,\, 1\le j \le i$,
$F_{j,j'}\times C_{u,u'}$ and $C_{j,j'}\times C_{u,u'}\times C_{v,v'}$, where $j,j'; u,u'; v, v'$ run over the range as indicated in the proof of 
Proposition \ref{homologybasis}.

Since $P_{i+1}$ is not dominated by a direct product according to Lemma \ref{non-prod}, we have 
\[
H_3(f_{i})([F_{j,j'}\times C_{u,u'}])=0 \ \ \text{and} \ \ H_3(f_{i})([C_{j,j'}\times C_{u,u'}\times C_{v,v'}])=0.
\]
Thus, there exists $1\le r \le i$ such that 
\[
H_3(f_{i})([Q_r])=d'[P_{i+1}]
\]
for some nonzero integer $d'$, that is, there is a $d'$-domination $Q_r\to P_{i+1}$. In particular, Lemma \ref{connectedsumdegrees} implies that
\begin{equation}\label{eq.d'}
0\neq d'\in D(Q_r,P_{i+1})=D(Q_r,K_{q_{i+1}}\# K_{d_{i+1}^2})\subseteq D(Q_r,K_{q_{i+1}}).
\end{equation}
Since $K_{q_{i+1}}$ is aspherical, and so $\pi_2(K_{q_{i+1}})=0$, Lemma \ref{l:degreeconnected} implies that 
\begin{equation*}
\begin{aligned}
D(Q_r,K_{q_{i+1}}) &=D((\#_{d_r}K_{q_r})\#K_{d_r}\#K_{d_r^2}, K_{q_{i+1}})\\
&=\sum^{d_r}D(K_{q_r},K_{q_{i+1}})+D(K_{d_r},K_{q_{i+1}})+D(K_{d_r^2},K_{q_{i+1}}).
\end{aligned}
\end{equation*}
Note that the pairs $(q_{i+1},q_r)$, $(q_{i+1},d_r)$ and $(q_{i+1},d_r^2)$ are all coprime and $q_r,d_r,d_r^2>1$. Hence, by Lemma \ref{l:degreecircle} we obtain
\[
D(K_{q_r},K_{q_{i+1}})=D(K_{d_r},K_{q_{i+1}})=D(K_{d_r^2},K_{q_{i+1}})=\{0\},
\] 
and so $D(Q_r,K_{q_{i+1}})=\{0\}$, which contradicts (\ref{eq.d'}).
\end{proof}

\noindent{\bf Claim 2:} For each $1\le i \le l$,
\begin{equation}\label{eq.claim2}\tag{$\ast$}
D(Q_1\times Q_2 \times\cdots\times Q_i, P_1\times P_2\times\cdots\times P_i)=
\{0,1\}\cup\biggl\{\prod_{j\in S}d_j \ | \ \emptyset\ne S\subseteq \{1,2,...,i\}\biggl\}.
\end{equation}
\begin{proof} We prove the claim by induction. For $i=1$, Theorem \ref{1,k} tells us
\[
D(Q_1,P_1)=\{0,1\}\cup\{d_1\},
\]
therefore (\ref{eq.claim2})  holds.

Suppose that (\ref{eq.claim2}) holds for $i-1$, that is,
\[
D(Q_1\times Q_2\times\cdots\times Q_{i-1},P_1\times P_2\times\cdots\times P_{i-1})=\{0,1\}\cup\biggl\{\prod_{j\in S}d_j \ | \ S\subseteq\{1,2,...,i-1\}\biggl\}.
\]
Note that $P_i$ is not dominated by a direct product (for example because $K_{q_i}$ is not dominated by products; cf. Lemmas \ref{non-prod} and \ref{connectedsumdomination}), and, by Claim 1, any map
\[
f_i\colon Q_1\times Q_2\times\cdots\times Q_{i-1}\rightarrow P_i,
\]
induces the trivial homomorphism
\[
H_3(f_{i})\colon H_3(Q_1\times Q_2 \times\cdots\times Q_{i-1};\mathbb{Q})\to H_3(P_{i};\mathbb{Q}).
\]
Thus, $P_i$ satisfies conditions (i) and (ii) of Theorem \ref{t:product} (for $W=Q_1\times\cdots\times Q_{i-1}$), and therefore Theorem \ref{t:product} implies (for $Z=P_1\times\cdots\times P_{i-1}$)
\[
D(Q_1\times Q_2 \times\cdots\times Q_i,P_1\times P_2\times\cdots\times P_i)
=D(Q_1\times Q_2 \times\cdots\times Q_{i-1},P_1\times P_2\times\cdots\times P_{i-1})\cdot 
D(Q_i,P_i).
\]
By the induction hypothesis and Theorem \ref{1,k}, it follows that
\begin{equation*}
\begin{aligned}
& D(Q_1\times Q_2\times\cdots\times Q_i,P_1\times P_2\times\cdots\times P_i)\\
&=\biggl(\{0,1\}\cup\biggl\{\prod_{j\in S}d_j \ | \ S\subseteq \{1,2,...,i-1\biggl\}\biggl)\cdot \{0,1,d_i\}\\
&=\{0,1\}\cup\biggl\{\prod_{j\in S}d_j \ | \ S\subseteq \{1,2,...,i\}\biggl\}.
\end{aligned}
\end{equation*}
Hence (\ref{eq.claim2}) holds for $i$. This finishes the proof of Claim 2.
\end{proof}
Theorem \ref{productdegrees} follows as a special case of Claim 2 for $i=l$.
\end{proof}

\bibliographystyle{amsalpha}

\end{document}